\documentclass[11pt]{article}
\usepackage[T1]{fontenc}
\usepackage[utf8]{inputenc}
\usepackage{amsmath,amssymb,amsthm}
\usepackage[margin=1.1in]{geometry}
\usepackage{booktabs}

\newtheorem{theorem}{Theorem}
\newtheorem{proposition}[theorem]{Proposition}

\newtheorem{corollary}[theorem]{Corollary}
\theoremstyle{definition}
\newtheorem{conjecture}[theorem]{Conjecture}
\newtheorem{remark}[theorem]{Remark}

\newcommand{\A}{\mathbb{A}}
\newcommand{\F}{\mathbb{F}}
\newcommand{\Tr}{\operatorname{Tr}}
\newcommand{\Res}{\operatorname{Res}}

\title{The proportion of polynomials over $\F_2$\\ with positive orthogonal multiplicity}
\author{David Niedbala Giraudin\\
\small Independent researcher, Meaux, France\\
\small ORCID 0009-0009-1526-1178}
\date{}

\begin{document}
\maketitle

\begin{abstract}
Let $\A=\F_2[T]$. The orthogonal multiplicity $m(f)$ of a monic $f\in\A$ of degree
$d\ge 1$ is the number of $g\in\A$ with $\deg g<d$, $\gcd(g,f)=1$, and all partial
quotients of $g/f$ of degree one. Blackburn (1998) asked how the proportion $p(d)$ of
monic polynomials of degree $d$ with $m(f)>0$ behaves as $d\to\infty$.

We attach to each prime divisor $P$ of $f$ a residue $\psi_P(f)\in\F_2$ of the
differential $dT/(T(T+1)f)$, and prove two unconditional facts about this family: a
reciprocity relation $\sum_{P\mid f}\psi_P(f)=0$ whenever $\gcd(f,T(T+1))=1$, and an
exact local distribution, namely that $\psi_P$ vanishes for a proportion exactly
$(|P|-2)/(2(|P|-1))$ of the admissible completions when $P\| f$, and for exactly half
of them when $P^e\| f$ with $e\ge2$.

Granting that these local conditions pool independently, we obtain
$p(d)\sim L\,d^{-1/2}$ with
\[
L=\frac{9}{4\sqrt{\pi}}\prod_{\deg P\ge 2}\Bigl(1+\frac{1}{2|P|}\Bigr)
\Bigl(1-\frac{1}{|P|}\Bigr)^{1/2}=1.2116120743\ldots,
\]
the product running over the monic irreducibles of $\A$ of degree at least two. The
same Euler product evaluated at a second parameter returns the exact identity
$\sum_f m(f)=2^d$, which fixes its normalisation. We give the exact value of
$2^d p(d)$ for $1\le d\le 33$; the predicted values agree with these to within
$5\cdot10^{-5}$ for $27\le d\le 33$.
\end{abstract}

\section{Statement of results}

Let $\A=\F_2[T]$, and for $0\neq f\in\A$ write $|f|=2^{\deg f}$. Every
$\alpha\in\F_2((T^{-1}))$ has a unique continued fraction expansion
$\alpha=[a_0,a_1,\dots]$ with $a_i\in\A$ and $\deg a_i\ge 1$ for $i\ge1$. Following
Blackburn \cite{Bl} and Lauder \cite{LaTh}, the \emph{orthogonal multiplicity} of a
monic $f\in\A$ of degree $d\ge1$ is
\[
m(f)=\#\{\,g\in\A:\ \deg g<d,\ \gcd(g,f)=1,\ \text{all partial quotients of } g/f
\text{ have degree }1\,\}.
\]
Polynomials with $m(f)>0$ are exactly the denominators of the rational functions whose
coefficient sequences have a perfect linear complexity profile; see
\cite{AHN} for a survey of the equivalent formulations and of the literature.

Two facts are classical. First, writing a word $w=(a_1,\dots,a_d)$ with
$\deg a_i=1$ and letting $K(w)$ denote its continuant, the map $w\mapsto K(w)$ realises
every pair $(g,f)$ counted by $m$ exactly once, so that
\begin{equation}\label{eq:sum}
\sum_{\substack{f\ \mathrm{monic}\\ \deg f=d}} m(f)=2^{d}\qquad(d\ge1).
\end{equation}
so that the average of $m$ over the monic polynomials of a given degree equals $1$,
an observation going back to Niederreiter \cite[p.~281]{Ni}.
Second, Blackburn \cite[Prop.~2]{Bl} proved that
\begin{equation}\label{eq:bl}
m(f)\in\{0\}\cup\{2^{\omega_0(f)}\},
\end{equation}
where $\omega_0(f)$ is the number of distinct non-linear monic irreducible divisors of
$f$. Blackburn closes \cite{Bl} by asking for the asymptotic behaviour of the
proportion
\[
p(d)=2^{-d}\,\#\{\,f\ \text{monic},\ \deg f=d,\ m(f)>0\,\}.
\]
The 2020 survey \cite{AHN} reviews this circle of questions without
recording an answer to it.

Combining \eqref{eq:sum} and \eqref{eq:bl} gives an exact reformulation, which we record
because it makes the arithmetic content visible.

\begin{theorem}\label{thm:A}
For every $d\ge1$,
\[
p(d)=2^{-d}\sum_{w\in\{T,\,T+1\}^{d}} 2^{-\omega_0(K(w))} .
\]
\end{theorem}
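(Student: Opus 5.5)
The plan is to regroup the sum over words according to the value of the continuant, and then use \eqref{eq:bl} to turn each group into an indicator of $m(f)>0$.

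\textbf{Step 1: words versus pairs.} Over $\F_2$ the only polynomials of degree one are $T$ and $T+1$, and both are monic. A pair $(g,f)$ counted by $m(f)$ has $\deg g<\deg f$, so $a_0=0$ and $g/f=[0,a_1,\dots,a_d]$ with every $a_i\in\{T,T+1\}$. The length is exactly $d$ because the degrees of the partial quotients add up to $\deg f=d$. Conversely, each word $w=(a_1,\dots,a_d)\in\{T,T+1\}^d$ gives the convergent
\[
\frac{K(a_2,\dots,a_d)}{K(a_1,\dots,a_d)}.
\]
Its denominator $K(w)$ is monic of degree $d$. Numerator and denominator are coprime, by the determinant identity for consecutive continuants. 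The numerator has degree $d-1<d$.

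\textbf{Step 2: the fibres of $w\mapsto K(w)$.} The classical bijection behind \eqref{eq:sum} says that $w\mapsto (K(a_2,\dots,a_d),K(w))$ hits every pair counted by $m$ exactly once. By uniqueness of the continued fraction expansion, it is injective onto these pairs. Hence for each monic $f$ of degree $d$,
\[
\#\{w\in\{T,T+1\}^d : K(w)=f\}=m(f).
\]

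\textbf{Step 3: regroup and apply \eqref{eq:bl}.} Grouping the words by their continuant gives
\[
\sum_{w\in\{T,T+1\}^d}2^{-\omega_0(K(w))}=\sum_{\substack{f\ \mathrm{monic}\\ \deg f=d}} m(f)\,2^{-\omega_0(f)}.
\]
By Blackburn's result \eqref{eq:bl}, each summand $m(f)\,2^{-\omega_0(f)}$ equals $1$ when $m(f)>0$ and $0$ otherwise. The right-hand side is therefore $\#\{f : m(f)>0\}$. Multiplying by $2^{-d}$ gives the theorem.

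The only point needing care is Step 2, namely that the fibre of $K$ over $f$ has size exactly $m(f)$. This requires two things: the continued fraction expansion of a rational function is unique (no ambiguity at the last partial quotient, since the last $a_i$ must have degree $\ge1$), and the length of the word is forced to be $d$. Once these are checked, the identity is a direct regrouping, and no asymptotic input is needed.
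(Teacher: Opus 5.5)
Your proof is correct and takes the same route as the paper: use \eqref{eq:bl} to see that $\sum_f m(f)\,2^{-\omega_0(f)}$ counts the $f$ with $m(f)>0$, then rewrite this sum over words using the parametrisation behind \eqref{eq:sum}. Your Steps 1--2 spell out the fibre count $\#\{w : K(w)=f\}=m(f)$, which the paper invokes in a single line.
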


\begin{proof}
By \eqref{eq:bl} each $f$ with $m(f)>0$ contributes $m(f)\,2^{-\omega_0(f)}=1$ to
$\sum_f m(f)2^{-\omega_0(f)}$ and each $f$ with $m(f)=0$ contributes $0$; the left-hand
sum is therefore the number of $f$ with $m(f)>0$. Expanding $m(f)$ by the
word parametrisation underlying \eqref{eq:sum} turns it into the stated sum over words.
\end{proof}

Thus $p(d)$ measures how the continuants of words in the two letters $T,T+1$ factor.

\medskip

For the local analysis, fix a monic $f$ of degree $d$ and consider the differential
\[
\omega_f=\frac{dT}{T(T+1)f}
\]
on the projective line over $\F_2$. For a monic irreducible $P$ we write $\F_P=\A/P$
and
\[
\psi_P(f)=\Tr_{\F_P/\F_2}\bigl(\Res_P(\omega_f)\bigr)\in\F_2 .
\]
If $P\|f$ and $P\nmid T(T+1)$ then the pole is simple and
\begin{equation}\label{eq:psi1}
\psi_P(f)=\Tr_{\F_P/\F_2}\!\left(\frac{1}{T(T+1)\,(f/P)\,P'}\right),
\qquad P'=\frac{dP}{dT},
\end{equation}
all quantities being reduced modulo $P$.

\begin{theorem}[Reciprocity]\label{thm:B}
Let $f$ be monic with $\gcd(f,T(T+1))=1$. Then
\[
\sum_{P\mid f}\psi_P(f)=0 .
\]
\end{theorem}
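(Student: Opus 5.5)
The plan is to apply the residue theorem on $\mathbb{P}^1_{\F_2}$ to $\omega_f$. For a rational differential $\omega$ on $\mathbb{P}^1$ over $\F_2$, the sum over all closed points $Q$ (finite places $P$ and the place $\infty$) of $\Tr_{\F_Q/\F_2}\Res_Q(\omega)$ vanishes. The poles of $\omega_f=dT/(T(T+1)f)$ lie among the prime divisors of $f$, the places $T$ and $T+1$, and possibly $\infty$. The hypothesis $\gcd(f,T(T+1))=1$ makes these three sets disjoint. So the residue theorem gives
\[
\sum_{P\mid f}\psi_P(f)+\Res_{T}(\omega_f)+\Res_{T+1}(\omega_f)+\Res_\infty(\omega_f)=0,
\]
where the two linear places have residue field $\F_2$ and need no trace. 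It then remains to show that the three extra residues sum to $0$.

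Next I compute these residues.
\begin{itemize}
\item At $T=0$ the pole is simple, and the residue is $1/((T+1)f)$ evaluated at $0$, that is $1/f(0)$. Since $T\nmid f$ we have $f(0)=1$, so this residue is $1$.
\item At $T=1$ the residue is $1/(T f)$ at $1$, that is $1/f(1)=1$, since $(T+1)\nmid f$.
\item At $\infty$, substitute $u=1/T$, so $dT=-u^{-2}du$. Then $T(T+1)f$ has order $-(d+2)$ in $u$, and $\omega_f$ has order $d+2-2=d\ge 1$ at $u=0$. Hence $\omega_f$ is regular at $\infty$ and its residue there is $0$.
\end{itemize}
The extra contributions are therefore $1+1+0=0$ in $\F_2$, which gives $\sum_{P\mid f}\psi_P(f)=0$. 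If $d=0$ the sum over $P$ is empty and the statement is trivial.

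The only point needing care is justifying the residue theorem with traces for closed points of higher degree. I would cite the standard residue theorem for function fields (e.g. Stichtenoth or Serre, \emph{Algebraic Groups and Class Fields}). Alternatively, one can prove it directly over $\F_2(T)$ from the partial-fraction decomposition of $1/(T(T+1)f)$. For a component $A/P^e$ with $\deg A<e\deg P$, the coefficient of $T^{-1}$ in its expansion at $\infty$ equals $\Tr_{\F_P/\F_2}\Res_P$ of $(A/P^e)\,dT$. In the simple-pole case this reduces to the identity that the leading coefficient of $A$ in $A/P$, with $\deg A<\deg P$, equals $\sum_{P(\alpha)=0}A(\alpha)/P'(\alpha)$, which is Euler's formula for partial fractions. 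The residue at $\infty$ of the whole differential is minus the sum of these coefficients. I expect this compatibility of $\Res_P$ with the trace, needed for $P$ of arbitrary degree and for repeated factors $P^e\|f$, to be the main technical step. It is classical, so I would cite it rather than reprove it.
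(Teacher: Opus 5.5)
Your proposal is correct and is essentially the paper's proof. Both apply the trace form of the residue theorem on $\mathbb{P}^1_{\F_2}$ to $\omega_f$, note the simple poles at $T$ and $T+1$ with residues $1/f(0)=1$ and $1/f(1)=1$, observe that $\omega_f$ vanishes to order $d$ at infinity and so has no residue there, and conclude from $1+1=0$ in $\F_2$; citing the standard residue theorem is enough, as the paper does, and the partial-fraction alternative you sketch is not needed.
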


\begin{theorem}[Local distribution]\label{thm:C}
Let $P$ be a monic irreducible of degree $j\ge2$ and put $Q=|P|=2^{j}$.
\begin{enumerate}
\item[(i)] Let $n\ge j$. As $h$ runs over the monic polynomials of degree $n$ with
$P\nmid h$, one has $\psi_P(Ph)=0$ for exactly $2^{\,n-j}\bigl(2^{\,j-1}-1\bigr)$
values of $h$, that is for a proportion exactly $\dfrac{Q-2}{2(Q-1)}$.
\item[(ii)] Let $e\ge2$ and $n\ge ej$. As $h$ runs over the monic polynomials of
degree $n$ with $P\nmid h$, one has $\psi_P(P^{e}h)=0$ for exactly half of the values
of $h$.
\end{enumerate}
\end{theorem}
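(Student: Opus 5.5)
The plan is to reduce both parts to a counting problem for a linear functional on a finite quotient of $\A$, using equidistribution of $h$ modulo powers of $P$. The basic input is elementary. For $m\ge1$ and $n\ge mj$, the monic polynomials of degree $n$ hit each residue class modulo $P^m$ exactly $2^{n-mj}$ times. This holds because such an $h$ is $T^n$ plus an arbitrary polynomial of degree $<n$, and reduction modulo $P^m$ from polynomials of degree $<n$ is a surjective $\F_2$-linear map with fibres of size $2^{n-mj}$. In particular, the classes of $h$ with $P\nmid h$ are exactly the unit classes $(\A/P^m)^\times$, each hit $2^{n-mj}$ times. I would take $m=1$ in (i) and $m=e$ in (ii).

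For (i), since $j\ge2$ we have $P\nmid T(T+1)$. Since $P$ is irreducible over a perfect field, it is separable, so $P'\not\equiv0 \pmod P$. Formula \eqref{eq:psi1} therefore gives $\psi_P(Ph)=\Tr(c\,u^{-1})$, where $c=(T(T+1)P')^{-1}\bmod P\in\F_P^\times$ and $u=h\bmod P\in\F_P^\times$. As $u$ runs over $\F_P^\times$, so does $c u^{-1}$. The kernel of $\Tr:\F_P\to\F_2$ has $Q/2$ elements, one of which is $0$, so exactly $2^{j-1}-1$ values of $u$ give $\psi_P=0$. Multiplying by the fibre size $2^{n-j}$ gives the count $2^{n-j}(2^{j-1}-1)$. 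Dividing by the total $2^{n-j}(Q-1)$ gives the proportion $(Q-2)/(2(Q-1))$.

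For (ii), set $g=(T(T+1)h)^{-1}$, which is a unit at $P$. Then $\psi_P(P^eh)=\Lambda(g)$, where
\[
\Lambda(x)=\Tr_{\F_P/\F_2}\Res_P\!\left(\frac{x\,dT}{P^{e}}\right).
\]
This $\Lambda$ is $\F_2$-linear and depends only on $x \bmod P^e$, since $x\,dT/P^e$ is regular at $P$ when $P^e\mid x$. The map $u\mapsto (T(T+1)u)^{-1}$ is a bijection of $(\A/P^e)^\times$, because $T(T+1)$ is a unit mod $P$. Combined with the equidistribution above, it suffices to show that $\Lambda$ vanishes on exactly half of $(\A/P^e)^\times$.

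The key step, which I expect to be the main obstacle, is constructing a fixed-point-free involution of the units that flips $\Lambda$. I would use translation $x\mapsto x+P^{e-1}v$ for a suitable $v$. Because $e\ge2$, adding a multiple of $P^{e-1}$ preserves being a unit mod $P$, so this translation permutes $(\A/P^e)^\times$ and is an involution in characteristic $2$. By linearity, it changes $\Lambda$ by the constant $\Lambda(P^{e-1}v)=\Tr\Res_P(v\,dT/P)=\Tr(v/P')$, where the last step is a simple pole computation. Since $P'$ is invertible mod $P$ and the trace is surjective, I can choose $v$ with $\Tr(v/P')=1$. The translation then swaps the two fibres of $\Lambda$ on the units, so each has exactly $\tfrac12|(\A/P^e)^\times|$ elements. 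Multiplying by $2^{n-ej}$ gives exactly half of the $h$.
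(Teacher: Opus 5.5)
Your proof is correct and follows the paper's approach. Part (i) is the same argument. In part (ii), both proofs perturb at the $P^{e-1}$ layer and use the simple-pole residue $\Res_P(v\,dT/P)=v/P'$ to shift $\psi_P$ by a surjective trace form. The difference is only in packaging. You linearise by passing to $g=(T(T+1)h)^{-1}$, which turns $\psi_P$ into the $\F_2$-linear form $\Lambda$ on $\A/P^{e}$, so a single fixed translation $x\mapsto x+P^{e-1}v$ pairs up the units. The paper instead multiplies $h$ by the subgroup $1+P^{e-1}(\A/P)$ and counts on each coset of size $Q$, which requires $(1+P^{e-1}s)^{-1}\equiv 1+P^{e-1}s$. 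You also make explicit the separability of $P$, which the paper uses tacitly.
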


The two statements say that the local invariant $\psi_P$ is equidistributed in the only
sense available: for $e=1$ its argument is constrained to the unit group of $\F_P$,
which costs the deviation from one half; for $e\ge2$ the constraint disappears.

Our last ingredient is not proved here.

\begin{conjecture}\label{conj}
For every real $x>0$, as $d\to\infty$,
\[
\sum_{\substack{f\ \mathrm{monic}\\ \deg f=d}} m(f)\,x^{\omega_0(f)}
\ \sim\ 2^{d}\,H(x)\,\frac{d^{\,x-1}}{\Gamma(x)},
\qquad
H(x)=\frac{9}{2}\,4^{-x}\prod_{\deg P\ge2}
\Bigl(1+\frac{x}{|P|}\Bigr)\Bigl(1-\frac{1}{|P|}\Bigr)^{x},
\]
the product being over the monic irreducibles of $\A$ of degree at least two.
\end{conjecture}

The Euler product converges absolutely, since
$\log(1+x/Q)+x\log(1-1/Q)=O(Q^{-2})$. Conjecture \ref{conj} is an independence
statement: $H(x)$ is exactly the product of the local densities furnished by
Theorems \ref{thm:B} and \ref{thm:C}, the factor $9/2\cdot4^{-x}$ accounting for the
two places of degree one and for the single relation imposed by Theorem \ref{thm:B}.
Its normalisation is pinned by an unconditional identity:

\begin{proposition}\label{prop:H1}
$H(1)=1$, so that Conjecture \ref{conj} at $x=1$ reduces to \eqref{eq:sum}.
\end{proposition}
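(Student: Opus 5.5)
The plan is to evaluate $H(1)$ in closed form using the zeta function of $\A$. The only nontrivial ingredient is the Euler product at $u=1/4$.

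At $x=1$ the local factor is $\bigl(1+\tfrac1{|P|}\bigr)\bigl(1-\tfrac1{|P|}\bigr)=1-|P|^{-2}$, so
\[
H(1)=\frac{9}{2}\cdot\frac14\prod_{\deg P\ge2}\Bigl(1-\frac{1}{|P|^{2}}\Bigr)=\frac98\prod_{\deg P\ge 2}\Bigl(1-\frac{1}{|P|^{2}}\Bigr).
\]
We complete this to a product over all monic irreducibles. The two primes of degree one, $T$ and $T+1$, each contribute $1-\tfrac14=\tfrac34$, so their combined factor is $\tfrac{9}{16}$. This gives
\[
\prod_{\deg P\ge2}\Bigl(1-\frac{1}{|P|^{2}}\Bigr)=\frac{16}{9}\prod_{P}\Bigl(1-\frac{1}{|P|^{2}}\Bigr).
\]

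To evaluate the full product, recall that unique factorisation in $\A$, together with the fact that there are exactly $2^n$ monic polynomials of degree $n$, gives
\[
\sum_{f\ \mathrm{monic}}u^{\deg f}=\sum_{n\ge0}2^nu^n=\frac{1}{1-2u}=\prod_P\bigl(1-u^{\deg P}\bigr)^{-1}
\]
for $|u|<1/2$. In this range both sides converge absolutely: the number of primes of degree $j$ is at most $2^j/j$, so $\sum_P |u|^{\deg P}<\infty$. This absolute convergence is the one point to verify, and it justifies the rearrangement. Setting $u=1/4$, which lies in this range, yields
\[
\prod_P\bigl(1-|P|^{-2}\bigr)=1-\tfrac12=\tfrac12.
\]

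Hence
\[
\prod_{\deg P\ge2}\Bigl(1-\frac{1}{|P|^{2}}\Bigr)=\frac{16}{9}\cdot\frac12=\frac89,
\qquad\text{so}\qquad
H(1)=\frac98\cdot\frac89=1.
\]

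Finally, at $x=1$ we have $x^{\omega_0(f)}=1$, $d^{x-1}=1$ and $\Gamma(1)=1$. The conjectured asymptotic therefore reads $\sum_f m(f)\sim 2^d$, which is implied by the exact identity \eqref{eq:sum}. This completes the reduction.
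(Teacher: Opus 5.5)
Your proof is correct and follows the paper's argument: reduce $H(1)$ to $\tfrac98\prod_{\deg P\ge2}(1-|P|^{-2})$, evaluate the full Euler product as $1/\zeta_\A(2)=\tfrac12$ via $\sum_n 2^n u^n=(1-2u)^{-1}$ at $u=\tfrac14$, and divide out the factor $\tfrac{9}{16}$ coming from $T$ and $T+1$. Your remarks on absolute convergence and on the specialisation $x^{\omega_0(f)}=1$, $d^{0}=1$, $\Gamma(1)=1$ just make explicit what the paper leaves implicit.
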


\begin{proof}
$H(1)=\tfrac98\prod_{\deg P\ge2}(1-|P|^{-2})$. The zeta function of $\A$ gives
$\prod_{P}(1-|P|^{-2})^{-1}=\zeta_\A(2)=\sum_{d\ge0}2^{d}4^{-d}=2$, so
$\prod_{P}(1-|P|^{-2})=\tfrac12$; removing the two places of degree one divides this by
$(1-\tfrac14)^2=\tfrac9{16}$, leaving $\tfrac89$. Hence $H(1)=\tfrac98\cdot\tfrac89=1$.
\end{proof}

Taking $x=\tfrac12$ in Conjecture \ref{conj} and using $\Gamma(\tfrac12)=\sqrt\pi$ gives
our answer to Blackburn's question.

\begin{corollary}\label{cor}
Granting Conjecture \ref{conj},
\[
p(d)\ \sim\ L\,d^{-1/2},\qquad
L=\frac{9}{4\sqrt\pi}\prod_{\deg P\ge2}\Bigl(1+\frac{1}{2|P|}\Bigr)
\Bigl(1-\frac{1}{|P|}\Bigr)^{1/2}=1.2116120743\ldots
\]
\end{corollary}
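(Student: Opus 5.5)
The statement immediately above is Corollary \ref{cor}. It follows from Conjecture \ref{conj} at $x=\tfrac12$ together with Theorem \ref{thm:A}. The plan has three steps: identify $2^d p(d)$ with the left-hand side of the conjecture at $x=\tfrac12$, substitute the asymptotic, and simplify the constant.

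\textbf{Step 1.} By \eqref{eq:bl}, each monic $f$ of degree $d$ has $m(f)\in\{0,2^{\omega_0(f)}\}$. Hence
\[
m(f)\,2^{-\omega_0(f)}=
\begin{cases}
1, & m(f)>0,\\
0, & m(f)=0.
\end{cases}
\]
Summing over $f$ gives the exact identity
\[
2^d p(d)=\sum_{\substack{f\ \mathrm{monic}\\ \deg f=d}} m(f)\,(1/2)^{\omega_0(f)}.
\]
This is exactly the content of Theorem \ref{thm:A}, before the sum is expanded over words, so no new input is needed here.

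\textbf{Step 2.} Apply Conjecture \ref{conj} with $x=\tfrac12$. This gives
\[
2^d p(d)\sim 2^d H(\tfrac12)\,\frac{d^{-1/2}}{\Gamma(\tfrac12)}.
\]
Dividing by $2^d$ and using $\Gamma(\tfrac12)=\sqrt\pi$ yields $p(d)\sim H(\tfrac12)\,\pi^{-1/2}d^{-1/2}$.

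\textbf{Step 3.} Evaluate the constant. Since $4^{-1/2}=\tfrac12$, we have $\tfrac92\cdot4^{-1/2}=\tfrac94$. Therefore
\[
\frac{H(\tfrac12)}{\sqrt\pi}=\frac{9}{4\sqrt\pi}\prod_{\deg P\ge2}\Bigl(1+\frac{1}{2|P|}\Bigr)\Bigl(1-\frac1{|P|}\Bigr)^{1/2}=L.
\]
The product converges absolutely, because each logarithm of a factor is $O(|P|^{-2})$, as noted after the conjecture. So $L$ is a well-defined positive constant.

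The main obstacle is the numerical value $1.2116120743\ldots$. Producing it requires accelerating the Euler product, which is not part of the asymptotic claim itself. To do this I would group the primes by degree $j$ using the count of monic irreducibles
\[
\pi(j)=\frac1j\sum_{k\mid j}\mu(k)\,2^{j/k}.
\]
I would then expand $\log$ of each factor in powers of $2^{-j}$. The contribution of degree $j$ is
\[
\pi(j)\Bigl(-\tfrac{3}{8}\,4^{-j}+O(8^{-j})\Bigr),
\]
and this decays like $2^{-j}/j$. It therefore suffices to compute the degrees $j\le 40$ or so exactly and bound the tail to reach ten digits. Alternatively, one can rewrite the product as a product of values $\zeta_\A(k)^{c_k}$, with $\zeta_\A(s)=(1-2^{1-s})^{-1}$ and corrections at the two linear places, to get faster convergence.
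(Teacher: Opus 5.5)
Your proposal is correct and follows the paper's argument. You use \eqref{eq:bl} to get the identity $2^dp(d)=\sum_f m(f)2^{-\omega_0(f)}$, which is the content of Theorem \ref{thm:A}; you then specialise Conjecture \ref{conj} to $x=\tfrac12$ with $\Gamma(\tfrac12)=\sqrt\pi$, and simplify $\tfrac92\cdot4^{-1/2}=\tfrac94$. Your sketch of how to evaluate the Euler product numerically (degree-$j$ contribution $\approx-\tfrac38\,\pi(j)4^{-j}$, tail bound, or rewriting via $\zeta_\A$) goes beyond the paper, which simply reports the value as computed from the absolutely convergent product.
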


In particular $p(d)\to0$, at the rate $d^{-1/2}$.

\section{Proof of Theorem \ref{thm:B}}

The differential $\omega_f=dT/(T(T+1)f)$ has poles only at the places dividing
$T(T+1)f$ and possibly at infinity. Since $\gcd(f,T(T+1))=1$, the poles at $T$ and at
$T+1$ are simple, with residues
\[
\Res_T(\omega_f)=\frac{1}{(T+1)f}\Big|_{T=0}=\frac{1}{f(0)}=1,
\qquad
\Res_{T+1}(\omega_f)=\frac{1}{Tf}\Big|_{T=1}=\frac{1}{f(1)}=1,
\]
both residue fields being $\F_2$, and $f(0)=f(1)=1$ because $f$ is prime to $T(T+1)$.
At infinity, put $u=1/T$; then $dT=u^{-2}du$ and $T(T+1)f$ has degree $d+2$, so
$\omega_f$ has a zero of order $d\ge0$ and no residue. The residue theorem on
$\mathbb{P}^1_{\F_2}$, in the form $\sum_v\Tr_{k_v/\F_2}\Res_v=0$, therefore gives
\[
1+1+\sum_{P\mid f}\psi_P(f)=0,
\]
and $1+1=0$ in $\F_2$. \qed

\begin{remark}
For $f=P^{e}$ with $P$ a non-linear irreducible, Theorem \ref{thm:B} forces
$\psi_P(f)=0$. Together with the criterion verified in Section \ref{sec:num}, this
recovers in one line the theorem of Mesirov and Sweet \cite{MS} that
$m(P)=2$ for every non-linear irreducible $P$, and its extension to prime powers
\cite[Prop.~3.17]{LaTh}, both as reported in \cite[\S4.2.2]{Ma}.
\end{remark}

\section{Proof of Theorem \ref{thm:C}}

\emph{(i)} Write $h=qP+r$ with $\deg r<j$. Since $h$ is monic of degree $n\ge j$ and
$P$ is monic of degree $j$, the quotient $q$ is monic of degree $n-j$ and is free,
while $r$ is arbitrary. Hence each residue class $r$ modulo $P$ is attained by exactly
$2^{\,n-j}$ monic $h$ of degree $n$, and the classes $r\neq0$ correspond precisely to
$P\nmid h$. So $h\bmod P$ is uniform on $\F_P^{\times}$.

By \eqref{eq:psi1}, $\psi_P(Ph)=\Tr(c/h)$ with $c=1/(T(T+1)P')\in\F_P^{\times}$ fixed,
and $y\mapsto c/y$ is a bijection of $\F_P^{\times}$. The trace
$\Tr:\F_Q\to\F_2$ is a surjective $\F_2$-linear map, so its kernel has $2^{\,j-1}$
elements, of which $2^{\,j-1}-1$ are non-zero. Multiplying by the $2^{\,n-j}$
polynomials in each class gives the stated count, and
$2^{\,j-1}-1$ out of $2^{\,j}-1$ is $(Q-2)/(2(Q-1))$.

\emph{(ii)} Now $\omega_{P^{e}h}$ has a pole of order $e$ at $P$, so
$\Res_P(\omega_{P^{e}h})$ depends on $h$ only through $h\bmod P^{e}$. Exactly as above,
$h\bmod P^{e}$ is uniform on $\A/P^{e}$ as $h$ runs over the monic polynomials of
degree $n\ge ej$, and $P\nmid h$ selects the unit group $(\A/P^{e})^{\times}$.

Let $s\in\A/P$ and replace $h$ by $h(1+P^{\,e-1}s)$ modulo $P^{e}$. Since $e\ge2$ we
have $P^{2(e-1)}\equiv0 \pmod {P^{e}}$, so
$(1+P^{\,e-1}s)^{-1}\equiv 1+P^{\,e-1}s$, and
\[
\omega_{P^{e}h(1+P^{e-1}s)}-\omega_{P^{e}h}
\equiv \frac{s\,dT}{T(T+1)Ph}\pmod{\text{holomorphic at }P}.
\]
The right-hand side has a simple pole at $P$, so this translates $\psi_P$ by
$\Tr_{\F_P/\F_2}\bigl(s/(T(T+1)hP')\bigr)$. The multiplier $1/(T(T+1)hP')$ is a unit
of $\F_P$, hence $s\mapsto\Tr(s/(T(T+1)hP'))$ is a surjective $\F_2$-linear form on
$\F_Q$ and takes each value of $\F_2$ exactly $Q/2$ times.

The subgroup $1+P^{\,e-1}(\A/P)$ of $(\A/P^{e})^{\times}$ has order $Q$ and partitions
the unit group into cosets of size $Q$, on each of which $\psi_P$ takes the value $0$
exactly $Q/2$ times. Summing over cosets gives exactly one half. \qed

\section{Numerical evidence}\label{sec:num}

\paragraph{Exact counts.}
Table \ref{tab} lists $N(d)=2^{d}p(d)$ for $1\le d\le 33$, obtained by enumerating the
$2^{d}$ words in $\{T,T+1\}^{d}$, computing their continuants and marking a bit array.
Two independent implementations agree on every entry. The value
$N(21)=556\,082$ agrees with the table of orthogonal multiplicities at degree $21$
published in \cite[p.~111]{Bl}, of which it is the sum of the positive entries; we are
not aware of published values beyond that degree.

\begin{table}[ht]
\centering
\small
\begin{tabular}{rr rr rr}
\toprule
$d$ & $N(d)$ & $d$ & $N(d)$ & $d$ & $N(d)$\\
\midrule
1 & 2 & 12 & 1\,435 & 23 & 2\,123\,440\\
2 & 3 & 13 & 2\,778 & 24 & 4\,156\,942\\
3 & 6 & 14 & 5\,338 & 25 & 8\,145\,692\\
4 & 9 & 15 & 10\,302 & 26 & 15\,969\,995\\
5 & 18 & 16 & 19\,921 & 27 & 31\,341\,882\\
6 & 33 & 17 & 38\,680 & 28 & 61\,545\,111\\
7 & 58 & 18 & 75\,053 & 29 & 120\,946\,388\\
8 & 110 & 19 & 146\,230 & 30 & 237\,805\,677\\
9 & 214 & 20 & 284\,755 & 31 & 467\,841\,182\\
10 & 393 & 21 & 556\,082 & 32 & 920\,902\,785\\
11 & 760 & 22 & 1\,085\,674 & 33 & 1\,813\,575\,746\\
\bottomrule
\end{tabular}
\caption{$N(d)=2^{d}p(d)$, the number of monic $f\in\F_2[T]$ of degree $d$ with
positive orthogonal multiplicity.}\label{tab}
\end{table}

\paragraph{Agreement with Conjecture \ref{conj}.}
Let $M(d)$ denote the coefficient extracted from the generating function underlying
Conjecture \ref{conj} at $x=\tfrac12$, that is the prediction for $p(d)$ at finite $d$
rather than its asymptotic form. The ratio $p(d)/M(d)$ equals
\[
1.000015,\ 0.999957,\ 1.000020,\ 1.000003,\ 0.999995,\ 1.000008,\ 1.000004
\]
for $d=27,\dots,33$, with mean $1.000000$. Correspondingly
$L\cdot p(d)/M(d)$ gives $1.21162\pm0.00002$ at each of these degrees, against the
value $1.2116120743$ of Corollary \ref{cor}; the two agree to $4\cdot10^{-7}$.

\paragraph{The local theorems.}
Theorem \ref{thm:C}(i) was checked by exhaustive count for all $P$ of degree
$2\le j\le5$ and all $j\le n\le j+6$: the number of $h$ with $\psi_P=0$ equals
$2^{\,n-j}(2^{\,j-1}-1)$ in every case. Theorem \ref{thm:B} was checked for all
squarefree $f$ prime to $T(T+1)$ of degree $4\le d\le14$: the sum vanishes in all
$7\,276$ cases. Theorem \ref{thm:C}(ii) was checked in the same way for all $P$ of
degree $2\le j\le4$ and all $e\in\{2,3\}$: whenever $n\ge ej$ the number of $h$ with
$\psi_P=0$ is exactly half of the admissible ones, and the equality fails below that
threshold, so the hypothesis $n\ge ej$ is sharp. The higher order residue used
there was itself validated against the simple pole formula at $e=1$, and against the
vanishing that Theorem \ref{thm:B} forces when $f=P^{e}$.

\paragraph{The criterion.}
All of the above rests on the identity
\begin{equation}\label{eq:crit}
m(f)>0 \iff \psi_P(f)=0 \ \text{ for every non-linear } P\mid f,
\end{equation}
which we have verified for every squarefree $f$ of degree at most $14$, both for $f$
prime to $T(T+1)$ ($7\,276$ cases) and for $f$ divisible by $T$ ($3\,641$ cases).
Criteria of this kind appear in Lauder's thesis \cite{LaTh}. We do not
prove \eqref{eq:crit} here; together with the independence underlying Conjecture
\ref{conj}, it is the one input that remains conjectural.

\paragraph{Scope.} Theorems \ref{thm:A}, \ref{thm:B}, \ref{thm:C} and Proposition
\ref{prop:H1} are unconditional. Corollary \ref{cor} is conditional on Conjecture
\ref{conj} and on \eqref{eq:crit}. The value of $L$ is the value of an absolutely
convergent Euler product, computed to twelve digits.

\section{Ancillary files}

\texttt{anc/count.c} enumerates the words of $\{T,T+1\}^{d}$ and returns $N(d)$ exactly;
it reproduces Table \ref{tab}. \texttt{anc/verify.py} contains an $O(d^{3})$ test for
$m(f)>0$, resting on the coefficient characterisation of Baum and Sweet
\cite[p.~574]{BS} restricted to the finite range relevant to a rational function,
together with \eqref{eq:crit}, a check of Theorems \ref{thm:B} and \ref{thm:C}
by exhaustive count, and an evaluation of $H(x)$ for $x=\tfrac12$ and $x=1$. Both files are
self-contained and take fixed parameters set at the top of the source.

\section*{Acknowledgements}

The computations and the preparation of this note were carried out in collaboration
with an AI assistant (Claude, Anthropic).

\end{document}